\newtheorem{theorem}{Theorem}[section]
\newtheorem{lemma}{Lemma}[section]
\newtheorem{proposition}{Proposition}[section]
\newtheorem{definition}{Definition}[section]
\newtheorem{remark}{Remark}[section]
\newcommand{\R}{\mathbb{R}}
\renewenvironment{proof}{{\it Proof.}~~}{\hfill$\square$}
\renewenvironment{description}
  {\list{}{\labelwidth=0cm \leftmargin=0cm \labelsep=0cm
           }}
  {\endlist}
\title{\LARGE \bf Consensus under Persistence Excitation}
\author{Fabio Ancona, Mohamed Bentaibi and Francesco Rossi
\thanks{F. Ancona and M. Bentaibi are with Dipartimento di Matematica ``Tullio Levi-Civita'', Universit\`a\ degli Studi di Padova, Via Trieste 63, 35121 Padova, Italy
        {\tt\small ancona@math.unipd.it, bentaibi@math.unipd.it}}%
\thanks{F. Rossi is with Dipartimento di Culture del Progetto, Università Iuav di Venezia, 30135 Venezia, Italy
        {\tt\small francesco.rossi@iuav.it}. F. Rossi is a member of G.N.A.M.P.A. (I.N.d.A.M.).}%
}
\begin{document}

\maketitle
\thispagestyle{empty}
\pagestyle{empty}

\begin{abstract}
We prove that a first-order cooperative system of interacting agents converges to consensus if the so-called Persistence Excitation condition holds. This condition requires that the interaction function between any pair of agents satisfies an integral lower bound. The interpretation is that the interaction needs to ensure a minimal amount of service.
\end{abstract}

\section{INTRODUCTION}

Studying  self-organization and emergence of patterns in collective dynamics has gained significant prominence in the control community. In particular, several researches have focused on understanding mechanisms underlying the dynamics of multi-agent systems, such as those enforcing the emergence of \textit{consensus}. In simple terms, this means that all agents reach an agreement, as for instance  a unanimous vote in elections. Applications of such models are found in a wide variety of fields, see e.g. \cite{tomlin1998conflict,jadbabaie2003coordination,benatti2020opinion,krause2012opinion,zha2020opinion,chuang2007state,bellomo2012dynamics}.

In this article, we study first-order cooperative systems of the following form:
\begin{eqnarray}\label{e-ODE-noM}
    \dot{x}_i(t) = \frac{\lambda_i}{N}\sum_{j=1}^{N} \phi_{ij}(t) (x_j(t) - x_i(t))
\end{eqnarray}
for $i \in \{1,\ldots,N\}$ where
\begin{align*}
    \phi_{ij}(t) = \phi(|x_i(t) - x_j(t)|)\geq 0
\end{align*}
for $ i,j \in\{1,\ldots,N\}$. It describes the evolution of $N \geq 2$ agents on an Euclidean space $\R^d$. The position $x_i(t) \in \mathbb{R}^d$ may represent opinion on different topics, velocity or other attributes of agent $i$ at time $t$. The (nonlinear) influence function $\phi_{ij}(t):\R\to\R$ is used to quantify the influence of agent $j$ on agent $i$, where $i,j\in\{1,\ldots,N\}$. The term $\lambda_i$ is a scaling parameter. 

From the modelling point of view, each agent is expected to communicate with its neighbours through a \textit{network topology}, influenced by sensor characteristics and the environment. While the easiest scenario involves a fixed network topology (e.g. \cite{watts1998collective, olfati2007consensus}), practical situations often involve dynamic changes, due to factors like communication dropouts, security concerns, or intermittent actuation. In this setting, potential connection losses between agents occur, hindering reaching consensus. Therefore, when interactions between agents are subject to failure, it becomes crucial to investigate whether consensus can still be achieved or not.  We model this scenario as follows:
\begin{eqnarray}\label{e-ODE}
    \dot{x}_i(t) = \frac{\lambda_i}{N}\sum_{j=1}^{N} M_{ij}(t)\phi_{ij}(t)(x_j(t) - x_i(t))
\end{eqnarray}
for $i \in \{1,\ldots,N\}$. The terms $M_{ij}:[0,+\infty)\mapsto [0,1]$ represent the weight given to the (directed) connection of agent $j$ with agent $i$. They encode the time-varying network topology and account for potential communication failures (e.g., when they vanish).

We quantify the possible lack of interactions by introducing the condition of \textit{persistent excitation} (PE from now on).
\begin{definition}[Persistent excitation]\label{defof:PE} Let $T,\mu>0$ be given. We say that the function $M \in L^{\infty}([0,+\infty);[0,1])$ satisfies the PE condition with parameters $\mu,T$ if it holds
     \begin{equation}\label{defof:PEgeneral}\tag{PE}
         \int_t^{t+T}M(s)\,ds \geq \mu \qquad \forall t \geq 0.
     \end{equation}
\end{definition}
~

Imposing the PE condition means that such a function is not too weak on any given time interval of length $T$. This can be seen as a condition on the minimum level of service.

Although the PE condition is a standard tool in classical control theory (see \cite{narendra2012stable,ChSi2010,ChSi2014}), its use in multi-agent systems has gained interest in the last years (see e.g. \cite{ren2008distributed,tang2020bearing,manfredi2016criterion,bonnet2021consensus}). Many of the results however impose a PE condition either on functions depending on the state of the system (see e.g. \cite{manfredi2016criterion}), or on quantities like the averaged graph-Laplacian generated by the communication weights with
respect to the variance bilinear form (see \cite{bonnet2021consensus}) or the scrambling coefficients generated by the communication weights (see \cite{bonnet2022consensus}). In all of these cases, the PE condition is then applied to quantities which in principle require a regular monitoring of the state of the system, instead of being applied to the communication weights only. In \cite{anderson2016convergence}, for instance, the authors prove that consensus holds by applying the PE condition on symmetric communication weights only ($M_{ij}=M_{ji}$) which are moreover regulated in the sense that one-sided limits exist for all $t \geq 0$, and they consider only the case where $\phi \equiv 1$.


The main result of this article shows that the PE condition on a general class of weights is sufficient to ensure consensus.
\begin{theorem}\label{thm:main}
Let $\{x_i(t)\}_{i=1}^N$ be a solution of \eqref{e-ODE} with initial data $\{\bar x_i\}_{i=1}^N$.

Assume the following conditions:
\begin{description}
\item[(H1) \label{hyp:Lip}] The function $\phi(\cdot):[0,+\infty)\to \mathbb{R}$ is Lipschitz continuous.

\item[(H2) \label{hyp:phimin}] The constant
\begin{eqnarray}\label{defof:phimin}
    \phi_{\min} := \min_{r \in [0,\max_{i,j \in \{1,\ldots N\}}|\bar x_i - \bar x_j|]}\phi(r)
\end{eqnarray}
satisfies $\phi_{\min} > 0$.

\item[(H3) \label{hyp:M}] All weights $M_{ij}:[0,+\infty)\to [0,1]$ are $\mathscr{L}^1$-measurable.

\end{description}

Fix  $T,\mu>0$ and assume that all $M_{ij}$ satisfy \eqref{defof:PEgeneral}. Then, consensus holds:
\begin{align*}
    \lim_{t \rightarrow + \infty} x_i(t) - x_j(t)=0 \quad \forall i,j \in \{1,\ldots,N\}.
\end{align*}

\end{theorem}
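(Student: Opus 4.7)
The strategy is to reduce to a scalar projection, use the standard monotonicity of extreme values to obtain a uniform lower bound on $\phi$, transfer the PE condition to the effective interaction weights, and finally derive an exponential contraction of the diameter.

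First I would rewrite \eqref{e-ODE} in the form $\dot{x}_i(t) = \sum_{j=1}^N a_{ij}(t)(x_j(t) - x_i(t))$ with non-negative coefficients $a_{ij}(t) := \lambda_i M_{ij}(t)\phi_{ij}(t)/N$. Under (H1) the right-hand side is Lipschitz in $x$, so global Carath\'eodory solutions exist and are unique. Projecting along any unit vector $e\in\R^d$, I set $y_i^e(t) := \langle x_i(t), e\rangle$, which satisfies the same type of scalar weighted-difference dynamics. By the classical Dini-derivative argument applied to an index attaining $\max_i y_i^e(t)$ (resp. $\min_i y_i^e(t)$), the scalar diameter $D_e(t) := \max_i y_i^e(t) - \min_i y_i^e(t)$ is nonincreasing in $t$. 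Hence the convex hull of $\{x_i(t)\}_{i=1}^N$ is forward-invariant, $|x_i(t) - x_j(t)| \le \max_{k,\ell}|\bar x_k - \bar x_\ell|$ for all $t\ge 0$, and (H2) yields the uniform pointwise bound $\phi_{ij}(t) \ge \phi_{\min} > 0$. Combining this bound with \eqref{defof:PEgeneral} transfers the PE property to the effective weights $a_{ij}$: setting $\lambda_{\min} := \min_i \lambda_i$,
\begin{equation*}
\int_t^{t+T} a_{ij}(s)\,ds \;\ge\; \frac{\lambda_{\min}\phi_{\min}}{N}\int_t^{t+T} M_{ij}(s)\,ds \;\ge\; \frac{\lambda_{\min}\phi_{\min}\mu}{N} =: \eta > 0,
\end{equation*}
for all $t\ge 0$ and all pairs $i,j$.

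The core of the proof is then to show that this effective PE condition forces $D_e(t)\to 0$. My plan is to establish a uniform contraction $D_e(t+\tau) \le \alpha D_e(t)$ for some $\tau > 0$ and $\alpha \in (0,1)$ independent of $t$ and $e$, and iterate. To derive the contraction, I would fix $t$, split the agents into the upper set $U := \{i : y_i^e(t) \ge (\max_k y_k^e(t) + \min_k y_k^e(t))/2\}$ and the complementary lower set $L$, and use the effective PE lower bound on each pair $(i,j) \in U \times L$ to pull $y_i$ and $y_j$ toward each other by a definite fraction of $D_e(t)$ over a window of length $T$. Turning this pairwise push into a contraction of the full diameter would then require iterating the argument over $N-1$ successive windows so as to propagate the effect along a spanning subgraph, in the spirit of the Moreau / Hendrickx--Blondel type arguments.

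The main obstacle is exactly this contraction estimate: the active instants provided by PE are neither synchronized across different pairs $(i,j)$ nor aligned with the snapshot partition $U,L$, which can itself evolve as the dynamics progress. Handling this rigorously would call either for a careful subdivision of the window into subintervals and a bookkeeping argument using (H1) to control how $\phi_{ij}$ and the agents' positions vary over small time-steps, or for a quantitative scrambling-coefficient estimate on the time-integrated transition matrix $\exp\bigl(\int_t^{t+T}A(s)\,ds\bigr)$ associated with the row-stochastic normalization of the $a_{ij}$'s. Once a uniform contraction factor $\alpha<1$ is in hand, iteration gives $D_e(t+n\tau) \le \alpha^n D_e(0)$, and since the bound is uniform in $e$, consensus in $\R^d$ follows.
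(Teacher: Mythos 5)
Your preliminaries are fine and match the paper's setup: the diameter (and every scalar projection of it) is nonincreasing, so \textbf{(H2)} gives the uniform bound $\phi_{ij}(t)\ge\phi_{\min}>0$ for all time, and the PE condition transfers to the effective weights $a_{ij}=\lambda_i M_{ij}\phi_{ij}/N$ with constant $K_{\min}\mu/N$. But the proof stops exactly where the theorem's content begins. The step you defer --- a uniform contraction factor $\alpha<1$ for the diameter over a fixed window, obtained by propagating pairwise ``pushes'' along a spanning subgraph across $N-1$ windows, or via a scrambling-coefficient bound on the transition matrix --- is the entire difficulty, and the obstruction you yourself identify (the active instants of the $M_{ij}$ are unsynchronized across pairs and unaligned with the evolving partition $U,L$) is precisely why the naive version of that argument does not close. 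As written, the proposal is a correct reduction plus an acknowledged gap at the core estimate. Note also that a uniform $\alpha<1$ would yield an exponential rate, which is strictly more than the theorem asserts; the paper explicitly states that its proof provides no rate, which is a hint that the quantitative contraction is not the intended (nor an easy) route.

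The paper circumvents the synchronization problem entirely by a compactness-and-contradiction argument. It samples the trajectory at times $T_n=nT$, extracts a subsequence along which the ordered positions converge to $x_1^*\le\cdots\le x_N^*$, and assumes $x_1^*<x_N^*$. The key tool is the barrier function $\psi(\alpha,z,\tau)=\alpha+e^{-K_{\max}\tau}(z-\alpha)$ of Lemma~\ref{l-psi}: once an agent crosses the barrier it stays above it on the rest of the window (a Gr\"onwall estimate using only the upper bound $K_{\max}$ on the total interaction), and in particular the top agent $x_N$ stays above $\psi(x_1^*-\eta,x_N^*,\tau)$ throughout $[T_k,T_k+T]$. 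For an agent that never crosses the barrier, the attraction exerted by that single high agent, integrated against the PE condition on the one weight $M_{iN}$, lifts it by a fixed amount $\eta>0$ above $x_1^*$ by the end of the window. Since each agent only needs PE on its own link to the extremal agent, no synchronization across pairs and no multi-window propagation are required; monotonicity of $\gamma_{\min}$ then contradicts $x_1(T_n)\to x_1^*$. If you want to salvage your quantitative route, you would need to actually prove the scrambling estimate (essentially a continuous-time Moreau-type theorem for Carath\'eodory weights); otherwise, adopt the barrier/compactness scheme.
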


~

The main interest of this result is given by the fact that consensus holds even under a very weak PE condition, e.g. when $\mu$ is very small and $T$ very large. This property is cleary connected with the fact that \eqref{e-ODE} is a cooperative system, i.e. interactions are always non-repulsive. Then, even when time runs with no interactions (or very weak ones), the overall configuration does not move far away from consensus.

We also show some simulations of systems of the form \eqref{e-ODE}, see Section \ref{s-sim}. While Theorem \ref{thm:main} ensures convergence for any conditions, simulations allow to estimate the rate of convergence. Unsurprisingly, the average rate of convergence is decreasing as long as $\mu$ decreases. More surprisingly, the decrease is very regular (linear in log-log coordinates) , both for linear and non-linear dynamics.

The structure of the article is the following: in Section \ref{section: models}, we describe two important models for opinion formation; we then provide some general properties of the dynamics. In Section \ref{section: proof of main}, we give the proof of the main result, first in the real line and then in the multidimensional setting. In Section \ref{s-sim}, we show and discuss some simulations in the one-dimensional case. Finally, in Section \ref{section: conclusion}, we draw some conclusions and present future research directions.

\section{Models of opinion formation}\label{section: models}

In this section, we describe two important models for opinion formation of the form \eqref{e-ODE-noM}. 

In the classical case, the function $\phi_{ij}(t)$ is symmetric and $\lambda_i > 0$ are fixed \cite{rainer2002opinion,degroot1974reaching}. In this setting, the average value is preserved and the system is cooperative. If the influence function satisfies condition {\bf \nameref{hyp:phimin}}, then all $x_i$ converge to the average value. However, such a setting has some scaling problems for large number of agents. Indeed, large groups of agents may have strong impact on small groups, even though they are very far from each other. For this reason, a different scaling has been proposed in \cite{motsch2011new}:
\begin{eqnarray*}
    \lambda_i = \frac{N}{\sum_{j=1}^N \phi_{ij}(t)}.
\end{eqnarray*}
This rescaling introduces asymmetry in the dynamics, thus average is not preserved. Yet, also in this setting one reaches consensus under condition {\bf \nameref{hyp:phimin}}.

We treat both cases in a unitary way, from now on. For this reason, we define
\begin{eqnarray}
 \phi_{\max} := \max_{r \in [0,\max_{i,j \in \{1,\ldots N\}}|\bar x_i - \bar x_j|]}\phi(r),\\
\label{defof:lambda_i}
    \lambda_i :=
    \begin{cases}
        1 \quad &\text{for fixed weights}\\
        \frac{N}{\sum_{j=1}^N \phi_{ij}} \quad &\text{for rescaled weights,}
    \end{cases}\\
\label{defof:kmax}
     K_{\max} :=
    \begin{cases}
        \phi_{\max} \quad &\text{for fixed weights}\\
        \frac{\phi_{\max}}{\phi_{\min}} \quad &\text{for rescaled weights,}
    \end{cases}\\
\label{defof:kmin}
     K_{\min} :=
    \begin{cases}
        \phi_{\min} \quad &\text{for fixed weights}\\
        \frac{\phi_{\min}}{\phi_{\max}} \quad &\text{for rescaled weights.}
    \end{cases}
\end{eqnarray}

We use the following inequalities later:
\begin{eqnarray}\label{sum of kernels less than eta max}
    \frac{K_{\min}}{N}\sum_{j=1}^N M_{ij} \leq \frac{\lambda_i}{N}\sum_{j=1}^N M_{ij}\phi_{ij} \leq K_{\max}
\end{eqnarray}
for all $i \in \{1,\ldots, N\}$. They are direct consequences  of the definitions above and the condition $M_{ij} \leq 1$. 

We also later use the following lemma, which proof is a direct computation.
\begin{lemma}\label{l-reverse}
    Given $\{x_1(t), \ldots, x_N(t)\}$ solution of \eqref{e-ODE}, then $\{-x_1(t), \ldots, -x_N(t)\}$ is a solution of \eqref{e-ODE} too.
    \end{lemma}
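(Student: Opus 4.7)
The plan is to verify the claim by direct substitution, exploiting a symmetry of \eqref{e-ODE} under the involution $x \mapsto -x$. I would set $y_i(t) := -x_i(t)$ for each $i \in \{1,\ldots,N\}$ and check that the tuple $\{y_i\}_{i=1}^N$ satisfies the same ODE (with initial data $\{-\bar x_i\}_{i=1}^N$).

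The key observation is that the coefficients on the right-hand side of \eqref{e-ODE} are invariant under this reflection. Indeed, $\phi_{ij}(t)=\phi(|x_i(t)-x_j(t)|)=\phi(|y_i(t)-y_j(t)|)$, so the nonlinear weights computed from the new trajectories coincide with the old ones; the scaling $\lambda_i$ is either $1$ or a function of the $\phi_{ij}$'s by \eqref{defof:lambda_i}, hence unchanged; and the $M_{ij}(t)$ depend on $t$ only. Differentiating then gives
\begin{align*}
\dot y_i(t) \;=\; -\dot x_i(t) \;=\; -\frac{\lambda_i}{N}\sum_{j=1}^N M_{ij}(t)\phi_{ij}(t)\bigl(x_j(t)-x_i(t)\bigr) \;=\; \frac{\lambda_i}{N}\sum_{j=1}^N M_{ij}(t)\phi_{ij}(t)\bigl(y_j(t)-y_i(t)\bigr),
\end{align*}
using $-(x_j-x_i)=y_j-y_i$. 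This is precisely \eqref{e-ODE} written for the $y_i$'s.

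There is no real obstacle: the lemma is simply a manifestation of the invariance of the right-hand side of \eqref{e-ODE} under $x\mapsto -x$. The only point worth highlighting is that both alternatives in \eqref{defof:lambda_i} depend on the state only through the distances $|x_i-x_j|$, which are themselves invariant under the reflection, so the same $\lambda_i$ appears on both sides of the identity.
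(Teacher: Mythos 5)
Your proof is correct and is exactly the ``direct computation'' the paper alludes to (the paper gives no further detail): you verify that $\phi_{ij}$, $\lambda_i$, and $M_{ij}$ are invariant under $x\mapsto -x$ and that the sign flip passes through the linear term $x_j-x_i$. Nothing is missing.
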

    The interest of this lemma is that it permits to reverse several results, e.g. properties about the maximum becoming properties about the minimum. 

\subsection{General properties}

In this section, we prove  properties of solutions of \eqref{e-ODE}. We first show that the diameter satisfies a dissipative property. Before doing so, we provide a useful lemma.

\begin{lemma}\label{lemma:scalarproductinequality}
     Let $i,j\in \{1,\ldots,N\}$ be a pair of indices such that
    \begin{align*}
        \max_{k,l \in \{1,\ldots,N\}}|x_k - x_l|=|x_i - x_j|.
    \end{align*}
It then holds
\begin{eqnarray*}
&&    \max_{k \in \{1,\ldots,N\}}\langle x_k, x_i - x_j\rangle = \langle x_i, x_i - x_j\rangle \\
 &&   \min_{k \in \{1,\ldots,N\}}\langle x_k, x_i - x_j\rangle = \langle x_j, x_i - x_j\rangle .
\end{eqnarray*}
\end{lemma}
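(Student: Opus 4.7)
The plan is to argue by contradiction, exploiting the fact that $|x_i-x_j|$ realizes the diameter of the configuration. Set $v := x_i - x_j$, so that $|v| = \max_{k,l}|x_k-x_l|$.

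For the first identity, suppose toward a contradiction that there exists $k\in\{1,\ldots,N\}$ with $\langle x_k, v\rangle > \langle x_i, v\rangle$. This is equivalent to $\langle x_k - x_i, v\rangle > 0$. I would then write
\begin{equation*}
x_k - x_j = (x_k - x_i) + (x_i - x_j) = (x_k - x_i) + v,
\end{equation*}
and expand the square of its norm:
\begin{equation*}
|x_k - x_j|^2 = |x_k - x_i|^2 + 2\langle x_k - x_i, v\rangle + |v|^2 > |v|^2 = |x_i - x_j|^2,
\end{equation*}
because the middle term is strictly positive and $|x_k - x_i|^2 \geq 0$. This contradicts the fact that $|x_i-x_j|$ is the maximum pairwise distance. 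Hence $\langle x_k, v\rangle \leq \langle x_i, v\rangle$ for every $k$, and the maximum is attained at $k=i$ (trivially, with equality).

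The second identity is symmetric. If $\langle x_k, v\rangle < \langle x_j, v\rangle$ for some $k$, then $\langle x_j - x_k, v\rangle > 0$, and writing $x_i - x_k = v + (x_j - x_k)$ and expanding gives $|x_i - x_k|^2 > |v|^2$, again contradicting the maximality of $|x_i-x_j|$. Alternatively, one can apply the maximum statement to the reflected configuration $\{-x_k\}$ via Lemma \ref{l-reverse} with the roles of $i$ and $j$ swapped, which automatically yields the minimum characterization. There is no real obstacle here: the whole argument is a one-line expansion of a square, and the only point to be careful about is to use strict inequality on $\langle x_k-x_i,v\rangle$ (respectively $\langle x_j-x_k,v\rangle$) so that the contradiction with the diameter really bites.
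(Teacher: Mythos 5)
Your proof is correct and complete: the expansion $|x_k-x_j|^2=|x_k-x_i|^2+2\langle x_k-x_i,\,x_i-x_j\rangle+|x_i-x_j|^2$ immediately turns a strict violation of the claimed maximality into a pair at distance strictly larger than the diameter, and the symmetric computation handles the minimum. The paper does not spell out an argument at all (it only points to an analogous lemma in a cited reference), and what you wrote is precisely the standard computation that reference carries out; the only cosmetic remark is that your aside invoking Lemma~\ref{l-reverse} is unnecessary, since that lemma concerns solutions of the dynamics while the statement here is purely about a static configuration, and your direct expansion already covers the minimum case.
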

\begin{proof}
    The proof is entirely similar to the proof \cite[Lemma 3.4]{bonnet2022consensus} in the context of graphons.
\end{proof}
\begin{proposition}\label{prop: contract of support}
        
    The function 
 \begin{eqnarray*}
     \gamma_{\max}(t):= \max_{i \in \{1,\ldots,N\}}\left|x_i(t)\right| 
 \end{eqnarray*}
 is non-increasing

 Similarly, the size of the support 
 \begin{eqnarray*}
     \mathcal{D}(t):= \max_{i,j \in \{1,\ldots,N\}}\left|x_i(t)-x_j(t)\right| 
 \end{eqnarray*}
 is non-increasing
 
Similarly, on the real line, the function 
 \begin{eqnarray*}
     \gamma_{\min}(t):= \min_{i \in \{1,\ldots,N\}}x_i(t)
 \end{eqnarray*}
 is non-decreasing.
\end{proposition}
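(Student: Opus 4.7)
The plan is to use the fact that $\gamma_{\max}$, $\mathcal{D}$, and $\gamma_{\min}$ are extrema of finitely many $C^1$ functions on any interval where the solution exists, hence locally Lipschitz, hence differentiable almost everywhere by Rademacher. To prove monotonicity it then suffices to show that the derivative has the correct sign at every point of differentiability, since a locally Lipschitz (in particular absolutely continuous) function whose a.e. derivative is $\leq 0$ is non-increasing, and symmetrically for $\geq 0$. At such a point, a standard envelope argument gives that $\frac{d}{dt}\gamma_{\max}^2(t)$ equals $\frac{d}{dt}|x_i(t)|^2$ for some index $i$ realizing the maximum; analogous statements hold for $\mathcal{D}^2$ and $\gamma_{\min}$.

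For $\gamma_{\max}$: at a point of differentiability, pick $i$ with $|x_i|=\gamma_{\max}$. Then for every $j$ we have $|x_j|\leq |x_i|$, so Cauchy--Schwarz gives $\langle x_i, x_j\rangle \leq |x_i||x_j|\leq |x_i|^2$, i.e.\ $\langle x_i, x_j-x_i\rangle \leq 0$. Since $\lambda_i, M_{ij}, \phi_{ij}\geq 0$, I compute
\begin{equation*}
\tfrac{1}{2}\tfrac{d}{dt}|x_i(t)|^2 = \tfrac{\lambda_i}{N}\sum_{j=1}^N M_{ij}(t)\phi_{ij}(t)\langle x_i(t), x_j(t)-x_i(t)\rangle \leq 0,
\end{equation*}
which yields $\frac{d}{dt}\gamma_{\max}^2(t)\leq 0$ and hence the claim.

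For $\mathcal{D}$: at a point of differentiability, pick indices $i,j$ realizing the maximum diameter. Then
\begin{equation*}
\tfrac{1}{2}\tfrac{d}{dt}|x_i-x_j|^2 = \tfrac{\lambda_i}{N}\sum_k M_{ik}\phi_{ik}\langle x_k-x_i, x_i-x_j\rangle - \tfrac{\lambda_j}{N}\sum_k M_{jk}\phi_{jk}\langle x_k-x_j, x_i-x_j\rangle.
\end{equation*}
Lemma \ref{lemma:scalarproductinequality} gives $\langle x_k, x_i-x_j\rangle \leq \langle x_i, x_i-x_j\rangle$ and $\langle x_k, x_i-x_j\rangle \geq \langle x_j, x_i-x_j\rangle$ for all $k$, so the first sum is $\leq 0$ and the second is $\geq 0$; combined with the minus sign, the whole expression is $\leq 0$. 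Hence $\mathcal{D}$ is non-increasing.

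For $\gamma_{\min}$ on the real line, the fastest route is to invoke Lemma \ref{l-reverse}: applying the already-proved statement about $\gamma_{\max}$ to the reversed solution $\{-x_i(t)\}$ is essentially the same argument and directly yields that $\min_i x_i(t)=-\max_i(-x_i(t))$ is non-decreasing; alternatively, at a point of differentiability one picks $i$ minimizing $x_i$, notes $x_j-x_i\geq 0$ for every $j$, and concludes $\dot x_i\geq 0$ directly from \eqref{e-ODE}. The main (minor) obstacle is the envelope/a.e.\ differentiability justification; everything else is a sign computation powered by Lemma \ref{lemma:scalarproductinequality} and the non-negativity of $\lambda_i, M_{ij}, \phi_{ij}$.
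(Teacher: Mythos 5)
Your proof is correct and follows essentially the same route as the paper: Lipschitz continuity plus Rademacher for a.e.\ differentiability, an envelope (Danskin-type) identity for the derivative of the squared maximum, the sign computation via $\langle x_i, x_j - x_i\rangle \leq 0$ for $\gamma_{\max}$ and Lemma \ref{lemma:scalarproductinequality} for $\mathcal{D}$, and Lemma \ref{l-reverse} for $\gamma_{\min}$. Your explicit remark that an absolutely continuous function with a.e.\ nonpositive derivative is non-increasing is a welcome clarification of a step the paper leaves implicit.
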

\begin{proof}
The functions $\gamma_{\max}$ and $\mathcal{D}$ are Lipschitz, since they are the pointwise maxima of a finite family of Lipschitz continuous functions. By Rademacher's theorem, they are differentiable almost everywhere. By Dankin's theorem (see \cite{danskin2012theory}) it  holds
  \begin{eqnarray*}
       \frac{1}{2}\frac{d}{dt}\gamma_{\max}^2(t) = \max_{i \in \Pi_1(t)}\left\langle \frac{d}{dt}x_i(t),x_i(t)\right\rangle
   \end{eqnarray*}
 where $\Pi_1(t) \in \{1,\ldots,N\}$ represents the nonempty subset of indices for which the maximum is reached. Fix an arbitrary $p\in \Pi_1(t)$ and observe that for all $j \in \{1,\ldots,N\}$ it  holds
\begin{eqnarray*}
    \left\langle x_{p}-x_j, x_{p}\right\rangle \geq 0,
\end{eqnarray*}
 which implies that for all $t \geq 0$ it holds
 \begin{eqnarray*}
     &\left\langle \frac{d}{dt}x_p(t),x_p(t)\right\rangle \\
     &= \frac{\lambda_p}{N}\sum_{j=1}^{N} M_{pj}(t)\phi_{pj}(t) \left\langle x_j(t) - x_p(t), x_p(t) \right\rangle \leq  0.
 \end{eqnarray*}
 Since this estimate holds for any $p \in \Pi_1(t)$, we have
\begin{eqnarray*}
    \frac{d}{dt}\gamma_{\max}(t) \leq 0 \quad \forall t \geq 0,
\end{eqnarray*}
i.e. the function $\gamma_{\max}$ is non-increasing. 

The statement for the size of the support is recovered as follows. Again, by using Dankin's theorem it holds
   \begin{align*}
       \frac{1}{2}\frac{d}{dt}\mathcal{D}^2(t) = \max_{i,j \in \Pi_2(t)}\left\langle \frac{d}{dt}(x_i(t) - x_j(t)), x_i(t) - x_j(t) \right\rangle
   \end{align*}
 where $\Pi_2(t) \in \{1,\ldots,N\} \times \{1,\ldots,N\}$ represents the nonempty subset of pairs of indices for which the maximum is reached.  Fix arbitrary $p,q \in \Pi_2(t)$. For easier notation, from now on we hide the dependence on time. Notice that for the case of normalized weights it holds
\begin{eqnarray*}
        &\left\langle\frac{d}{dt}(x_{p} - x_{q}),x_{p} - x_{q}\right \rangle \\
        &= -\frac{1}{\sum_{k=1}^N \phi_{p k}}\sum_{j=1}^{N} M_{p j} \phi_{p j}\left\langle x_{p}-x_j, x_{p} - x_{q}\right\rangle \\
   & - \frac{1}{\sum_{k=1}^N \phi_{q k}}\sum_{j=1}^{N} M_{q j} \phi_{q j}\left\langle x_j - x_{q},x_{p} - x_{q}\right\rangle .
        \end{eqnarray*}
By Lemma \ref{lemma:scalarproductinequality}, for all $j \in \{1,\ldots,N\}$ it holds
\begin{align*}
    \left\langle x_{p}-x_j, x_{p} - x_{q}\right\rangle \geq 0 \quad \text{and} \quad \left\langle x_j - x_{q},x_{p} - x_{q}\right\rangle \geq 0
\end{align*}
and therefore, by \eqref{defof:kmin}, it holds
\begin{eqnarray*}
         &\left\langle\frac{d}{dt}(x_{p} - x_{q}),x_{p} - x_{q}\right \rangle\\
        &\leq -\frac{K_{\min}}{N} \left(\sum_{j=1}^{N} M_{p j} \left\langle x_{p}-x_j, x_{p} - x_{q}\right\rangle \right.\\
        &\quad \left.+ \sum_{j=1}^{N} M_{q j}\left\langle x_j - x_{q},x_{p} - x_{q}\right\rangle \right)\leq 0.
\end{eqnarray*}

The statement for the minimum on the real line can be recovered by Lemma \ref{l-reverse}.
\end{proof}

\subsection{Dynamics on the real line}

In this section, we study the case $d=1$, i.e. in which the configuration space is the real line $\R$. In this setting, ordering is clearly a major advantage.

We introduce a useful operator, that is $\psi(\alpha,z,\tau)$. The idea is that, given an initial configuration in which the position of all agents has a value larger than $\alpha$, the quantity $\psi(\alpha,z,\tau)$ represents a lower barrier for the position at time $\tau$ of a particle starting at $z$. 
\begin{lemma}\label{l-psi}
Define 
\begin{eqnarray}
\label{e-psi}    
\psi(\alpha,z,\tau)&:=&\alpha+e^{-K_{\max}\tau}(z-\alpha).
\end{eqnarray}
Let $x(t) := \{x_1(t),\ldots,x_N(t)\} $  be a solution of \eqref{e-ODE} on $\R$ that satisfies $x_j(\theta)\geq \alpha$ for all $j\in\{1,\ldots,N\}$ for some $\theta >0$. If there exists an index $i$ and $\tau^*\in [0,T]$ such that $x_i(\theta+\tau^*)\geq \psi(\alpha,z,\tau^*)$, then
\begin{equation}
\label{e-stimapsi}  x_i(\theta+\tau)\geq \psi(\alpha,z,\tau)~~~\mbox{~for all }\tau\in[\tau^*,T].
\end{equation}
\end{lemma}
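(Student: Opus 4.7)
The plan is to recognize $\tau\mapsto\psi(\alpha,z,\tau)$ as the explicit solution of the one-dimensional linear ODE $\dot v=-K_{\max}(v-\alpha)$ with $v(0)=z$, and then to compare the trajectory $\tau\mapsto x_i(\theta+\tau)$ with this barrier via a Gronwall-type argument. The central step is to establish the one-sided differential inequality
\[
\frac{d}{d\tau}x_i(\theta+\tau)\geq -K_{\max}\bigl(x_i(\theta+\tau)-\alpha\bigr)\qquad\text{for a.e.\ }\tau\in[0,T].
\]

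First, I would invoke Proposition \ref{prop: contract of support}: since we are on the real line, $\gamma_{\min}$ is non-decreasing, so the hypothesis $x_j(\theta)\geq\alpha$ for every $j$ propagates to $x_j(\theta+\tau)\geq\alpha$ for all $j$ and all $\tau\geq 0$. In particular $u(\tau):=x_i(\theta+\tau)-\alpha$ is non-negative and each interaction term $x_j(\theta+\tau)-\alpha$ is non-negative as well. Then I would split $x_j-x_i=(x_j-\alpha)-(x_i-\alpha)$ inside \eqref{e-ODE} for the chosen index $i$, discard the non-negative first sum, and bound the coefficient in front of $u$ by $K_{\max}$ using \eqref{sum of kernels less than eta max}. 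This delivers $\dot u\geq -K_{\max}u$ a.e.\ on $[0,T]$, i.e.\ the differential inequality displayed above.

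The last step is the standard integrating-factor comparison: multiplying through by $e^{K_{\max}\tau}$ shows that $\tau\mapsto e^{K_{\max}\tau}u(\tau)$ is non-decreasing. Hence for every $\tau\in[\tau^*,T]$,
\[
e^{K_{\max}\tau}u(\tau)\geq e^{K_{\max}\tau^*}u(\tau^*)\geq e^{K_{\max}\tau^*}\cdot e^{-K_{\max}\tau^*}(z-\alpha)=z-\alpha,
\]
where the second inequality is the hypothesis $x_i(\theta+\tau^*)\geq\psi(\alpha,z,\tau^*)$. Dividing by $e^{K_{\max}\tau}$ and adding $\alpha$ gives exactly \eqref{e-stimapsi}.

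I do not expect any substantial obstacle. The only subtlety is that the $M_{ij}$ are merely measurable, so $x_i$ is only absolutely continuous and the differential inequality must be read in the Carath\'eodory sense; the monotonicity of $\tau\mapsto e^{K_{\max}\tau}u(\tau)$ nonetheless follows from its a.e.\ non-negative derivative, so the integrating-factor argument carries through unchanged. The conceptual content is simply that $\psi$ encodes the worst-case exponential decay of $u$ toward $0$, with $K_{\max}$ being the right rate thanks to \eqref{sum of kernels less than eta max}.
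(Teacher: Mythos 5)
Your proof is correct, and it takes a recognizably different (and somewhat leaner) route than the paper's. The paper works with the difference $x_i(t)-\psi(\alpha,z,t-\theta)$ directly: it splits the interaction sum according to whether $x_j(t)\leq\psi$ or $x_j(t)>\psi$, bounds each group separately, and arrives at a Gr\"onwall inequality $\partial_t(x_i-\psi)\geq a(t)(x_i-\psi)$ with the \emph{time-varying} coefficient $a(t)=-\tfrac{\lambda_i}{N}\sum_j M_{ij}(t)\phi_{ij}(t)$, from which sign preservation follows. You instead derive the single constant-rate inequality $\dot u\geq -K_{\max}u$ for $u=x_i-\alpha$ by decomposing $x_j-x_i=(x_j-\alpha)-(x_i-\alpha)$, discarding the non-negative part, and bounding the total weight by $K_{\max}$ via \eqref{sum of kernels less than eta max}; the integrating factor then compares $u$ with the explicit solution $e^{-K_{\max}\tau}(z-\alpha)$ of the barrier ODE. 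Both arguments rest on the same two ingredients --- the forward propagation of $x_j\geq\alpha$ from Proposition \ref{prop: contract of support} and the upper bound $K_{\max}$ on the aggregate interaction --- but yours avoids the case split on the position of $x_j$ relative to $\psi$ and replaces the time-dependent Gr\"onwall coefficient with a constant one, which makes the comparison more transparent; the paper's formulation, by keeping the exact coefficient $a(t)$, is the shape one would need if a sharper (state-dependent) barrier were ever wanted. Your remark on the Carath\'eodory reading of the differential inequality is apt, since the $M_{ij}$ are only measurable and $x_i$ is merely Lipschitz; monotonicity of $e^{K_{\max}\tau}u(\tau)$ from its a.e.\ non-negative derivative closes that gap correctly.
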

\begin{proof} Define $\tau:=t-\theta$ and for $t > \theta$ compute 
    \begin{eqnarray*}
        &&\partial_t( x_i(t)-\psi(\alpha,z,t-\theta))=\\
        &&\frac{\lambda_i}{N}\sum_{x_j(t)\leq \psi(\alpha,z,\tau)} M_{ij}(t)\phi_{ij}(t)(x_j(t) - x_i(t))+\\
        &&\frac{\lambda_i}{N}\sum_{x_j(t)> \psi(\alpha,z,\tau)} M_{ij}(t)\phi_{ij}(t)(x_j(t) - x_i(t))\\
        &&+ K_{\max}e^{-K_{\max}\tau}(z-\alpha).
        \end{eqnarray*}
        In the first term, we write 
        \begin{eqnarray*}
        &&\hspace{-5mm} x_j(t) - x_i(t)\geq (\alpha -\psi(\alpha,z,\tau))+(\psi(\alpha,z,\tau)-x_i(t))\\
        && = -e^{-K_{\max}\tau}(z-\alpha)+(\psi(\alpha,z,\tau)-x_i(t)).
        \end{eqnarray*}
    
        Here we used Proposition \ref{prop: contract of support}, since $$x_j(t)\geq \gamma_{\min}(t)\geq \gamma_{\min}(\theta)\geq \alpha.$$
        We then estimate 
        \begin{eqnarray*}
        &&\partial_t( x_i(t)-\psi(\alpha,z,t-\theta))\geq\\
        &&\frac{\lambda_i}{N}\sum_{x_j(t)\leq \psi(\alpha,z,\tau)} M_{ij}(t)\phi_{ij}(t) (-e^{-K_{\max}\tau}(z-\alpha))+\\
        &&\frac{\lambda_i}{N}\sum_{x_j(t)\leq \psi(\alpha,z,\tau)} M_{ij}(t)\phi_{ij}(t)
        (\psi(\alpha,z,\tau)-x_i(t)))+\\
        &&\frac{\lambda_i}{N}\sum_{x_j(t)> \psi(\alpha,z,\tau)} M_{ij}(t)\phi_{ij}(t)(\psi(\alpha,z,\tau) - x_i(t))\\
    &&+K_{\max}e^{-K_{\max}\tau}(z-\alpha)\geq K_{\max} (-e^{-K_{\max}\tau}(z-\alpha))\\
      && +\frac{\lambda_i}{N}\sum_{j=1}^N M_{ij}(t)\phi_{ij}(t)
        (\psi(\alpha,z,\tau)-x_i(t)))+\\
    &&K_{\max}e^{-K_{\max}\tau}(z-\alpha)=\\
        &&a(t)(x_i(t)-\psi(\alpha,z,\tau)),
                \end{eqnarray*}
                where $$a(t):=-\frac{\lambda_i}{N}\sum_{j=1}^{N}M_{ij}(t)\phi_{ij}(t).$$
              
Recall that $x_i(\theta+\tau^*)-\psi(\alpha,z,\tau^*)\geq 0$. We now apply Gr\"onwall's inequality on $x_i(t)-\psi(\alpha,z,t-\theta)$ with $t\in[\theta+\tau^*,\theta+T]$. It ensures
\begin{eqnarray*}
&&x_i(\theta+\tau)-\psi(\alpha,z,\tau)\geq \\
&&e^{\int_{\tau^*}^\tau a(t)\,dt}\left(x_i(\theta+\tau^*)-\psi(\tau^*)\right)\geq 0.
\end{eqnarray*}
This proves \eqref{e-stimapsi}.
\end{proof}

\section{Proof of Theorem \ref{thm:main}} \label{section: proof of main}
In this section, we prove Theorem \ref{thm:main}. We first prove it for the 1-dimensional case, then for a general dimension $d>1$.

\subsection{Proof in $\R$}

\label{s-proof1}
In this section, we prove Theorem \ref{thm:main} when the configuration space is $\R$.

Define the sequence $T_n:=nT$. Since the sequence
\begin{equation}
    \{x_1(T_n),\ldots, x_N(T_n)\}\in \R^N \label{e-x1xN}
\end{equation}
is bounded, due to Proposition \ref{prop: contract of support}, there exists as subsequence of $T_n$, that we do not relabel, such that \eqref{e-x1xN} is converging to some $\{x_1^*,\ldots,x_N^*\}$. Since the number of permutations of $N$ agents is finite, there exists a subsequence of $T_n$, that we do not relabel, for which the order of particles is preserved. With no loss of generality, for a single relabeling, we assume 
\begin{eqnarray}\label{e-ordineTn}
    x_1(T_n) \leq x_2(T_n) \leq \ldots \leq  x_N(T_n)
\end{eqnarray}
for all $n \in \mathbb{N}$. This implies 
\begin{eqnarray}\label{ordering of the xi stars}
    x_1^* \leq x_2^* \leq \ldots \leq x_N^*.
\end{eqnarray}

We now prove that it holds $x_1^*=x_N^*$. By contradiction, from now on we assume 
\begin{equation}
x_1^*<x_N^* \label{e-cond-interessante}.
\end{equation}

We observe that for all $i,j \in \{1,\ldots,N\}$ it holds $\lambda_i \phi_{ij}\geq \phi_{\min}$ in the case of fixed weights and 
$\lambda_i \phi_{ij}\geq \frac{\phi_{\min}}{\phi_{\max}}$ in the case of rescaled weights. It then holds 
\begin{eqnarray}\label{ineq:lambdai phiij geq kmin}
\lambda_i \phi_{ij}\geq K_{\min} \quad \forall ~ i,j \in \{1,\ldots,N\}   
\end{eqnarray}
where $K_{\min}$ is defined by \eqref{defof:kmin}. We now choose 
\begin{eqnarray}\label{defof: eta}
\eta:=\frac{e^{-K_{\max}T}(x_N^*-x_1^*)}4 \frac{K_{\min} \mu }{N(1+K_{\max}e^{K_{\max}T}T)}.    
\end{eqnarray}
Observe that $K_{\min}\leq K_{\max}$ and $\mu\leq T$ ensure
\begin{eqnarray}\label{e-stima-eta}
\eta\leq \frac{e^{-K_{\max}T}(x_N^*-x_1^*)}4.    
\end{eqnarray}

We recall that $\lim_{n\to+\infty} x_1(T_n)=x_1^*$ and $\lim_{n\to+\infty} x_N(T_n)=x_N^*$. We then choose $k\in\mathbb{N}$ such that it holds $x_1(T_k)\in (x_1^*-\eta,x_1^*+\eta)$ and $x_N(T_k)\in (x_N^*-\eta,x_N^*+\eta)$.

{\bf Claim 1.} It holds $x_1(T_k)\in (x_1^*-\eta,x_1^*]$. Similarly, it holds $x_N(T_k)\in [x_N^*,x_N^*+\eta)$. 

We prove the claim, by contradiction. If $x_1(T_k)>x_1^*$, i.e. $x_1(T_k)\geq x_1^*+\epsilon$ for some $\epsilon>0$, then $x_i(T_k)\geq x_1^*+\epsilon$ for all $i$ due to \eqref{e-ordineTn}. This implies hence $\gamma_{\min}(T_k)\geq x_1^*+\epsilon$. By Proposition \ref{prop: contract of support}, this in turn implies $\gamma_{\min}(t)\geq x_1^*+\epsilon$ for all $t\geq T_k$. In particular, it holds $x_1(T_n)\geq \gamma_{\min}(T_n)\geq x_1^*+\epsilon$. Thus, it holds $\lim_{n\to\infty} x_1(T_n)\geq x_1^*+\epsilon$. This contradicts the condition $\lim_{n\to\infty} x_1(T_n)= x_1^*$.

 The condition $x_N(T_k)\in [x_N^*,x_N^*+\eta)$ can be easily recovered, by using Proposition \ref{prop: contract of support}, and we have thus proved Claim 1.

We now define
\begin{eqnarray*}
y_0&:=&x_1^*-\eta+2\eta e^{K_{\max}T}\\
\psi^*(\tau)&:=&\psi(x^*_1-\eta,y_0,\tau),
\end{eqnarray*}
where $\psi$ is defined by \eqref{e-psi} and compute
\begin{eqnarray}\label{e-psi-star}
\psi^*(\tau)&=&x_1^*-\eta+2\eta e^{K_{\max}(T-\tau)}.
\end{eqnarray}

We consider the trajectory of any agent $x_i(T_k+\tau)$ with $\tau\in[0,T]$. We have two cases:

\noindent {\bf Case 1.} There exists $\tau^*\in [0,T]$ such that $$x_i(T_k+\tau^*)\geq \psi^*(\tau^*).$$ 
    By Lemma \ref{l-psi}, this implies 
    $$x_i(T_k+T)\geq \psi^*(T)=x_1^*+\eta.$$

\noindent {\bf Case 2.} It holds $x_i(T_k+\tau)<\psi^*(\tau)$ for all $\tau\in[0,T]$. Here, we observe that $x_N(T_k)\geq x_N^*$, hence Lemma \ref{l-psi} implies 
\begin{eqnarray*}
x_N(T_k+\tau)\geq \psi(x^*_1-\eta,x_N^*,\tau)    
\end{eqnarray*}
for all $\tau\in[0,T]$.

Recall that $\psi$ is given by \eqref{e-psi} and $\psi^*$ is given by \eqref{e-psi-star}. We define $\sigma:=t-T_{k}$,  and for $t \in [T_k, T_k + T]$ compute 
    \begin{eqnarray*}
        &&\dot x_i(t)=\\
        &&\frac{\lambda_i}{N}\sum_{x_j(t)\leq \psi^*(\sigma),j\neq N} M_{ij}(t)\phi_{ij}(t)(x_j(t) - x_i(t))+\\
        &&\frac{\lambda_i}{N}\sum_{x_j(t)> \psi^*(\sigma),j\neq N} M_{ij}(t)\phi_{ij}(t)(x_j(t) - x_i(t))+\\
        &&\frac{\lambda_i}{N}M_{iN}(t)\phi_{iN}(t)(x_N(t) - x_i(t))\geq
        \end{eqnarray*}
    \begin{eqnarray*}
         &&\frac{\lambda_i}{N}\sum_{x_j(t)\leq \psi^*(\sigma),j\neq N} M_{ij}(t)\phi_{ij}(t)(x_1^*-\eta - \psi^*(\sigma))+\\
        &&\frac{\lambda_i}{N}\sum_{x_j(t)> \psi^*(\sigma),j\neq N} M_{ij}(t)\phi_{ij}(t)(\psi^*(\sigma) - x_i(t))+\\
        &&\frac{\lambda_i}{N}M_{iN}(t)\phi_{iN}(t)(\psi(x_1^*-\eta,x_N^*,\sigma) - \psi^*(\sigma))\\
         &&\geq \frac{\lambda_i}{N}\sum_{j=1}^N M_{ij}(t)\phi_{ij}(t)(-2\eta e^{K_{\max}(T-\sigma)})+0+\\
        &&\frac{\lambda_i}{N}M_{iN}(t)\phi_{iN}(t)
e^{-K_{\max}\sigma}(x_N^*-x_1^*+\eta - 2\eta e^{K_{\max}T})\\
        &&\geq-2\eta K_{\max}e^{K_{\max}T}+\\
        &&
        \frac{\lambda_i}{N}M_{iN}(t)\phi_{iN}(t)
e^{-K_{\max}\sigma}\left(x_N^*-x_1^*+0- \frac{x_N^*-x_1^*}2\right).
        \end{eqnarray*}
        where in the last inequality we have used \eqref{e-stima-eta}. By using \eqref{ineq:lambdai phiij geq kmin}, we finally find
        $$\dot x_i(t)\geq -2\eta K_{\max}e^{K_{\max}T}+
        \frac{K_{\min}}{N} M_{iN}(t) e^{-K_{\max}T}\frac{x_N^*-x_1^*}2.$$
        
        By integration in $[T_k, T_k+T]$ and using \eqref{defof:PEgeneral}, it holds
        \begin{eqnarray*}
            &&x_i(T_k+T)\geq x_i(T_k)-2\eta K_{\max}e^{K_{\max}T}T+\\
            &&\frac{K_{\min}}{2N} \mu e^{-K_{\max}T}(x_N^*-x_1^* )\geq\\
            &&x_1^*-\eta-2\eta K_{\max}e^{K_{\max}T}T+2\eta (1+K_{\max}e^{K_{\max}T}T)\\
            &&=x_1^*+\eta.
        \end{eqnarray*}

Here, inequality $x_i(T_k)\geq x_1^*-\eta$ comes from \eqref{e-ordineTn} and Claim 1. The last equality comes from the definition \eqref{defof: eta}.

By merging the two cases, we proved that $x_i(T_k+T)\geq x_1^*+\eta$ for all $i$. By Proposition \ref{prop: contract of support}, this implies that $x_i(t)\geq x_1^*+\eta$ for all $t\geq T_k+T$. In particular, this implies $x_1(t)\geq x_1^*+\eta$, hence $\lim_{n\to+\infty} x_1(T_n)\geq x_1^*+\eta>x_1^*$. This is a contradiction. 

Then, condition \eqref{e-cond-interessante} is false and it holds $x_1^*=x_N^*$. We now prove that this implies consensus. Indeed, for each $t>0$ choose $T_n$ such that $T_n\leq t \leq T_{n+1}$. Observe that, due to Proposition \ref{prop: contract of support}, for each $x_i$ it holds
$$x_1(T_n)=\gamma_{\min}(T_n)\leq \gamma_{\min}(t)\leq x_i(t)$$
and similarly $x_i(t)\leq x_N(T_n)$. This implies
\begin{eqnarray*}
    &&\lim_{t\to+\infty} x_i(t)-x_j(t)\leq \lim_{n\to+\infty}x_N(T_n)-x_1(T_n)=\\
    &&x_N^*-x_1^*=0.
    \end{eqnarray*}
    Since $i,j$ are arbitrary, it also holds $\lim_{t\to+\infty} x_j(t)-x_i(t)\leq 0$. Then $\lim_{t\to+\infty} x_i(t)-x_j(t)= 0$. Again by arbitrariness of indexes, the result is proved.

\subsection{Proof for any dimension}

In this section, we prove Theorem \ref{thm:main} on $\R^d$ for any $d>1$. The idea is to show that the dynamics can be projected on a line, and to use the result of Section \ref{s-proof1} on such line.

The key observation is the following. Fix two vectors $x_0,v\in \R^d$ and take a solution $\{x_i(t)\}$ of \eqref{e-ODE} in $\R^d$. Define the projected solution as 
$$y_i(t):=(x_i(t)-x_0)\cdot v.$$

In general, it is clear that $y_i(t)$ is not a solution of \eqref{e-ODE}, since it holds $$\phi_{ij}(t)=\phi(|x_i(t) - x_j(t)|)\neq \phi(|y_i(t) - y_j(t)|).$$

Yet, a careful look to the proof in Section \ref{s-proof1} shows that the key properties ensuring the result are Proposition \ref{prop: contract of support} and estimates for the interaction kernels encoded in \eqref{sum of kernels less than eta max}. In higher dimension, the fact that Proposition \ref{prop: contract of support} holds also implies that $\phi_{\min},\phi_{\max}>0$ computed for the $x_i$ variables provide corresponding bounds for the interaction of the $y_i$. As a consequence, Theorem \ref{thm:main} also holds for the variables $y_i$, for any choice of fixed $x_0,v\in \R^d$.

We now prove the theorem. Consider the sequence of times $T_n:=nT$. By Proposition \ref{prop: contract of support}, the sequence $\{x_1(T_n),\ldots,x_N(T_n)\}$ is bounded. By passing to a subsequence $T_n$, that we do not relabel, we have that the sequence is converging to some $\{x_1^*,\ldots,x_N^*\}$. Choose one among the maximizers of $|x_i^*-x_j^*|$. With no loss of generality, we assume that it is realized by $|x_1^*-x_N^*|$. Choose $x_0=x_1^*$ and $v=x_N^*-x_1^*$. It is crucial to observe that they are fixed vectors. Define the variables $y_i(t)=(x_i(t)-x_1^*)\cdot v$ and apply Theorem \ref{thm:main} on the real line. It then holds $\lim_{t\to+\infty} y_i(t)=y^*$ for some common $y^*\in\R$. By choosing $y_1(t)=(x_i(t)-x_1^*)\cdot v$, it holds $\lim_{t\to+\infty} y_1(t)=0$, hence $y^*=0$. By choosing $y_n(t)=(x_n(t)-x_1^*)\cdot v$, it holds 
$$\lim_{t\to+\infty} y_n(t)=\lim_{t\to+\infty} (x_n(t)-x_1^*)\cdot v=|v|^2=y^*=0.$$
This implies $v=0$, i.e. $x_1^*=x_N^*$. By recalling that $x_1,x_N$ were chosen as maximizers of $|x_i^*-x_j^*|$, this implies $x_i^*=x_1^*$ for all $i$, i.e. $\lim_{n\to+\infty} x_i(T_n)=x_1^*$. We rewrite it as follows: for all $\epsilon>0$, there exists $k\in\mathbb{N}$ such that $x_i(T_n)\in B_\epsilon (x_1^*)$ for all $n\geq k$. By Proposition \ref{prop: contract of support} and recalling that $B_\epsilon (x_1^*)$ is convex, we have that for all $t\geq T_k$ it holds $x_i(t)\in B_\epsilon (x_1^*)$. This is equivalent to $\lim_{t\to +\infty} x_i(t)=x_1^*$ for all $i$, that implies consensus.

\section{Simulations} \label{s-sim}

In this section, we provide some simulations for the dynamics given by \eqref{e-ODE}. For simplicity, we always study the dynamics of $N=10$ agents, with $\lambda_i=1$ fixed and $T=1$ in the PE condition. 

We first consider $N=10$ agents with initial positions randomly chosen from the uniform probability distribution on $[0,1]$. For a given initial position, we show in Figure \ref{f-traj} four solutions of \eqref{e-ODE} with $M_{ij}$ satisfying the PE condition with $\mu=1,0.6,0.3,0.1$, respectively. It is clear that convergence always holds, in accordance with Theorem \ref{thm:main}. Yet, the rate of convergence decreases with the decrease of $\mu$.

\begin{figure}
    \centering
\includegraphics[width=8cm]{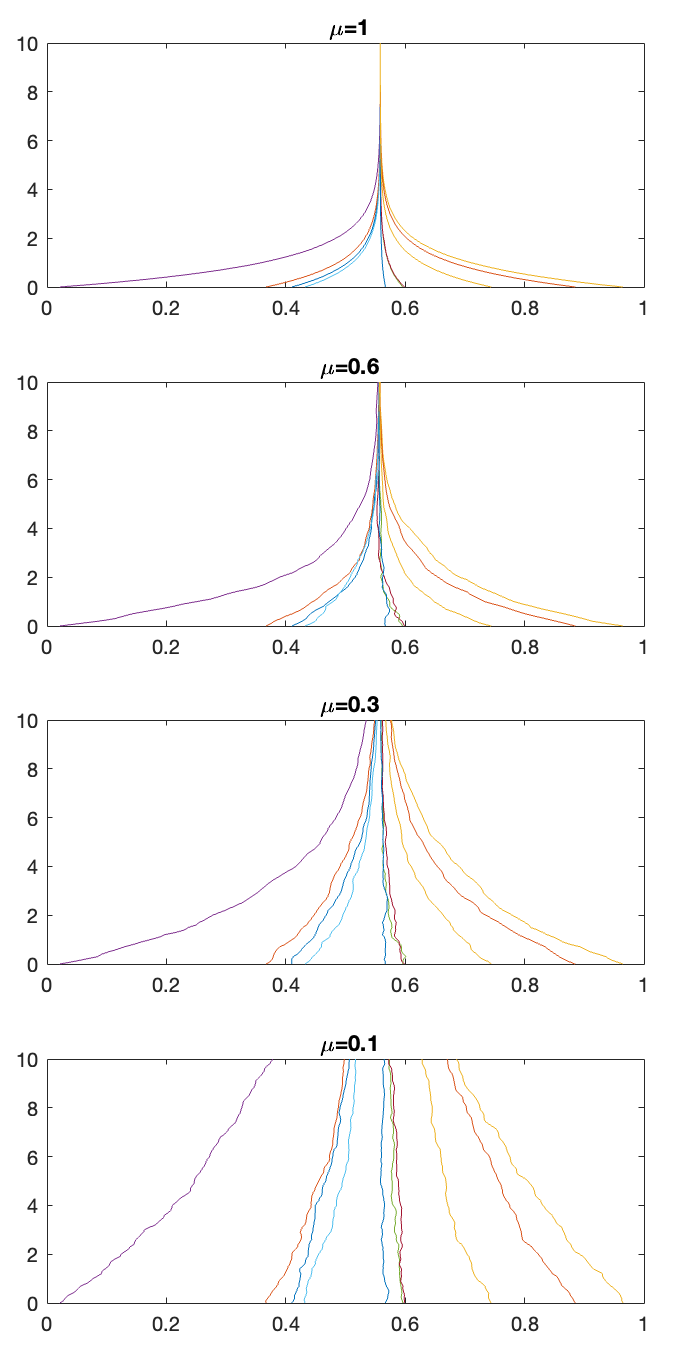}
    \caption{Solutions of \eqref{e-ODE} with varying $\mu$.}
    \label{f-traj}
\end{figure}

We investigate more in detail the rate of convergence as a function of $\mu$. We consider $10^3$ different initial configurations and, for each of them and each value of $\mu$, we compute the time to reach a diameter of $10^{-2}$. For each $\mu$, we then consider the average of such times, that can be seen as a good measure of the rate of convergence. The result is provided in Figure \ref{f-rate}, that is a log-log graph of the value of $\mu$ and the corresponding average time to convergence. The fact that the graph is linear shows that the average behavior is very regular: the PE condition becomes a uniform reduction of the interaction $M_{ij}(t)\phi$ to $\mu\phi$. We aim to deepen the comprehension of this phenomenon in a future work.

\begin{figure}
    \centering
\includegraphics[width=8cm]{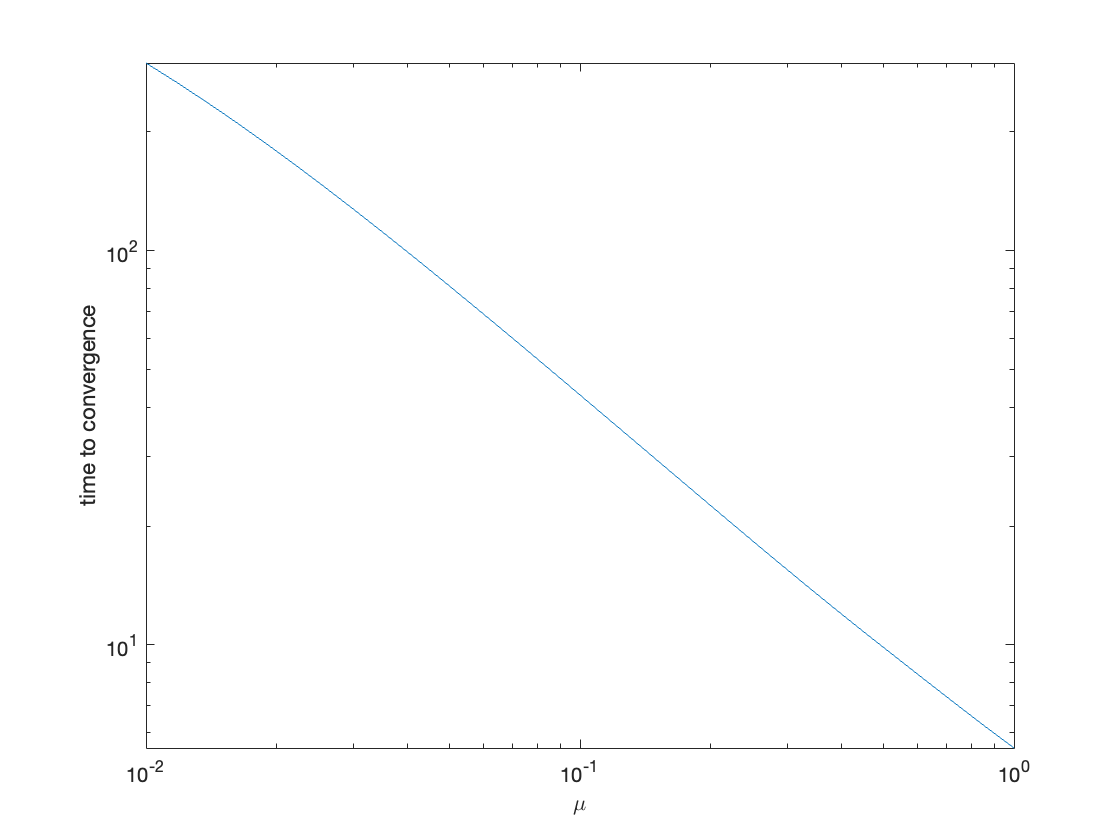}
    \caption{The average time to convergence as a function of $\mu$.}
    \label{f-rate}
\end{figure}

\section{Conclusions and future directions}\label{section: conclusion}

In this article, we have proved that the condition of Persistence Excitation \eqref{defof:PEgeneral} 
is sufficient to ensure consensus of a cooperative multi-agent system.

The natural extension of this result would be to prove a similar result for second-order systems, such as those describing velocity alignement and flocking \cite{cucker2002mathematical}. Yet, the natural difficulty in this project is the fact that our proof here does not provide a rate of convergence towards the consensus. In second-order systems, this corresponds to a lack of knowledge of the expansion of the support, undermining the certainty of having an interaction function bounded from below by a strictly positive constant, i.e. {\bf (H2)}.

Our project is then to provide quantitative estimates for the rate of convergence, as a first tool towards a more general applicability of the theory. The simulations provided here are a first step towards this direction.

\bibliographystyle{ieeetr}
\bibliography{references.bib}

\end{document}